\documentclass{amsart}

\usepackage[T1]{fontenc}
\usepackage{lmodern}
\usepackage{amsmath,amssymb,amsthm,mathtools}
\usepackage{microtype}
\usepackage{tikz}
\usetikzlibrary{arrows.meta,calc}
\usepackage[colorlinks=true,linkcolor=blue,citecolor=blue,urlcolor=blue]{hyperref}

\newtheorem{theorem}{Theorem}
\newtheorem{corollary}{Corollary}
\newtheorem{proposition}{Proposition}
\newtheorem{lemma}{Lemma}

\newcommand{\N}{\mathbb N}
\newcommand{\R}{\mathbb R}
\newcommand{\eps}{\varepsilon}
\newcommand{\claimqed}{\hfill$\triangleleft$}

\title[Close relatives of Hilbertian balls]{Homeomorphism between close relatives of Hilbertian balls}
\author{Antonio Avilés}
\date{}

\subjclass[2020]{54B10, 46B26, 54D30}
\keywords{Hilbertian ball, uniform Eberlein compact, norm-preserving homeomorphism}

\thanks{Author supported by Fundaci\'{o}n S\'{e}neca - ACyT Regi\'{o}n de Murcia project 21955/PI/22, and by ERDF and MICIU/AEI/10.13039/501100011033 project PID2021-122126NB-C32.}

\begin{document}

\begin{abstract} We present a solution to some problems posed by the author and Kalenda. We show that the closed ball of nonseparable Hilbert space in its weak topology is homeomorphic to its positive part, as well as to its product with the Hilbert cube. In the separable setting we obtain that there is a weak homeomorphism of the closed unit ball of $\ell_2$ onto its positive part that preserves the norm, and via a result of Dijkstra and van Mill, the same is true for the ball of $\ell_\infty=\ell_1^*$ in the weak$^*$ topology. All spaces $B(\kappa,a,b)$ considered by the author and Kalenda are shown to be homeomorphic. The solution has been found by AI (Chatgpt 5.5), the role of the author has been to ask the right questions, check and understand the answers, and adapt the writing to his personal human taste. 
\end{abstract}

\maketitle

\section{Introduction}

For a set \(\kappa\), that can be taken a cardinal, we consider the compact space
\[
B(\kappa)=
\left\{x\in[-1,1]^\kappa:\sum_{i\in\kappa}|x_i|\le1\right\},
\]
with the inherited product topology. It can also be viewed as the closed unit ball of the dual Banach space $c_0(\kappa)^*=\ell_1(\kappa)$ in the weak$^*$ topology. Through the obvious bijection taking the appropriate power of each coordinate and keeping its sign, $B(\kappa)$ is homeomorphic to the closed unit ball of the Banach space $\ell_p(\kappa)$ in the weak topology, for $1<p<\infty$. In particular, it is the closed unit ball of a Hilbert space in its weak topology. The class of compact spaces homeomorphic to subspaces of some $B(\kappa)$ is a well studied class known as uniform Eberlein compact spaces. In the countable case, by virtue of Keller-Klee's Theorem~\cite{Keller,Klee}, the space $B(\mathbb{N})$ is homeomorphic to any metrizable infinite-dimensional compact convex set in a locally convex space. However, in the uncountable case, even minor alterations of the space $B(\kappa)$ that may seem to share the most conspicuous properties can nevertheless be not homeomorphic. In the article \cite{Aviles2007}, using a result of Bell \cite{Bell}, it was shown that $B(\kappa)$ is a polyadic space, while there were renormings of the nonseparable Hilbert spaces whose closed balls are not polyadic in the weak topology. In a joint work with O. Kalenda \cite{AvilesKalenda2009}, we showed, using so-called \emph{fiber orders}, that the finite powers $B(\kappa)^n$ are all nonhomeomorphic for different $n$. In fact, as shown later using variants of chain conditions~\cite{Aviles2009}, $B(\kappa)^n$ is not a continuous image of $B(\kappa)^m$ if $n>m$. However, some relatives of the Hilbert ball remained elusive to all these techniques. It remained open whether the positive part of the ball $B^+(\kappa) = B(\kappa)\cap [0,1]^\kappa$ is homeomorphic to $B(\kappa)$. The space $B^+(\kappa)$ can be also viewed as the space of probability measures on the one point compactification of the discrete space $\kappa$. While a result of Kalenda \cite{Kalenda2008} implies that the product $B^+(\kappa)\times [0,1]$ is homeomorphic to $B^+(\kappa)$, it also remained open whether $B(\kappa)\times [0,1]$ is homeomorphic to $B(\kappa)$. These problems were reviewed in \cite{AvilesKalenda2010}, where a more general family of relatives of $B(\kappa)$ is introduced:  For functions
\[
 a,b:\kappa\longrightarrow[-1,1],\qquad a(i)<b(i),
\]
define the compact space
\[
B(\kappa,a,b)=
\left\{x\in\R^\kappa:
\sum_{i\in\kappa}|x_i|\le1,
\ a(i)\le x_i\le b(i)\text{ for all }i\in\kappa
\right\},
\]
obtained by cutting \(B(\kappa)\) with coordinate bands. We make the nontriviality assumption that $\mu(a,b)=\sum_{i\in\kappa}\bigl(a(i)^+ + b(i)^-\bigr)<1$, which is exactly when $B(\kappa,a,b)$ is an infinite compact space. The positive part of the ball $B^+(\kappa)$ is the particular case of the constant functions $a=0$ and $b=1$. The question was posed in \cite{AvilesKalenda2010} whether, for a given $\kappa$, all the nontrivial spaces $B(\kappa,a,b)$ are homeomorphic or not. We solve this here in the positive.

\begin{theorem}\label{thm:classification}
$B(\kappa,a,b)$ is homeomorphic to $B(\kappa)$ for any nontrivial choice of $
a,b:\kappa\to[-1,1]$ with $a(i)<b(i)$ for $i\in\kappa$ and $\mu(a,b)<1$.
\end{theorem}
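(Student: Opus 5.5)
Let me sketch how I would prove Theorem~\ref{thm:classification}. The overall strategy is to pass through the positive ball $B^+(\kappa)$ as an intermediate model and build every homeomorphism coordinatewise, so that only the single constraint $\sum|x_i|\le1$ couples the coordinates. To handle that constraint I will work with \emph{norm-preserving} maps: homeomorphisms $\phi_i$ of intervals that satisfy $|\phi_i(t)|$ equal to some fixed monotone reparametrization of $|t|$, patched so that the total $\ell_1$-mass is transformed by one global rule. Concretely, I would aim to prove three things in order:
\begin{enumerate}
\item[(1)] $B(\kappa)\cong B^+(\kappa)$;
\item[(2)] $B(\kappa,a,b)\cong B(\kappa,a',b')$ whenever each $[a(i),b(i)]$ is ``of the same type'' as $[a'(i),b'(i)]$;
\item[(3)] a reduction of an arbitrary nontrivial band $(a,b)$ to one of the two model cases, namely $B(\kappa)$ or $B^+(\kappa)$, possibly times a factor like $[0,1]$ or a finite-dimensional piece that is absorbed.
\end{enumerate}

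For step (3), I first split $\kappa$ into three parts. The first is the set $I_0$ of coordinates where $0\in[a(i),b(i)]$. The second is the finite or countable set $I_1$ of coordinates where $0\notin[a(i),b(i)]$. The finiteness/countability of $I_1$ is forced, because $\mu(a,b)<1$ implies $\sum_{i\in I_1}\bigl(a(i)^++b(i)^-\bigr)<1$, and every $i\in I_1$ contributes positively. On $I_1$ a translation $x_i\mapsto x_i-a(i)^+ + b(i)^-$ moves the interval to one containing $0$ as an endpoint. It changes the $\ell_1$-norm only by the constant $\mu$ (up to sign bookkeeping), so after rescaling the total budget by $1-\mu$ we reduce to the case $0\in[a(i),b(i)]$ for all $i$.

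Next, I classify coordinates by whether $0$ is interior or an endpoint, and by how the endpoints compare with the residual budget. For example, $[a(i),b(i)]\supseteq[-1,1]$ effectively means no restriction. Then I would prove a one-dimensional lemma. Given intervals $J\ni0$ and $J'\ni0$ of the same type, I want a homeomorphism $h\colon J\to J'$ with $h(0)=0$, together with a single increasing homeomorphism $g\colon[0,1]\to[0,1]$, such that $x\in B(\kappa,a,b)\iff h(x)\in B(\kappa,a',b')$.

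This cannot be literally coordinatewise, because truncations interact with the sum. So instead I would use a ``radial'' construction: write $x=r\cdot u$ with $r=\sum|x_i|$, transform $u$ coordinatewise, and rescale. The condition that $r\,u_i\in[a(i),b(i)]$ depends on $r$, so I would need a continuous family $h_{r}$ and verify continuity at $r=0$. Continuity there holds because all coordinates tend to $0$.

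Step (1) is the heart. I would try to convert signs into extra coordinates. Index $\kappa\times\{+,-\}$ and send $x\in B(\kappa)$ to $(x_i^+,x_i^-)\in B^+(\kappa\times2)\cong B^+(\kappa)$. This map is injective, but its image is the closed set where $x_i^+x_i^-=0$, not the whole positive ball. So I need to ``fill in'' the pairs. Taking $(s,t)\mapsto$ a point in the $\ell_1$-triangle $\{s,t\ge0\}$ parametrized by the segment $\{u-v\}$ together with a leftover mass $\min(s,t)$ suggests the following. I would send the leftover mass into one additional coordinate that records the total leftover $\sum_i\min(p_i,q_i)$. This uses Kalenda's result $B^+(\kappa)\times[0,1]\cong B^+(\kappa)$ to absorb the extra interval.

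I expect the main obstacle to be making this decomposition continuous and bijective while keeping the norm constraint exact. The leftover total is continuous in the product topology only on a bounded-norm set, and here it is only lower semicontinuous. So I would first try norm-preserving maps. For these, the total $\sum_i(p_i+q_i)$ is preserved, and the leftover equals $\text{norm}-\sum|p_i-q_i|$, a difference of a fixed quantity and a lower semicontinuous one. I would work inside the preimage fibres of the norm to recover continuity, possibly by first showing $B(\kappa)\cong B(\kappa)\times[0,1]$ via a norm-preserving separable argument on countable blocks glued through a continuous inverse-limit.
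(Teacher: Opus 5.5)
\textbf{There is a genuine gap.} None of your three steps is actually carried out, and neither mechanism you propose can work in the product topology.

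\emph{The radial construction in step (2) fails.} The quantity $r(x)=\sum_i|x_i|$ is only lower semicontinuous on $B(\kappa)$: for example $x^{(n)}=\tfrac12e_1+\tfrac12e_n\to\tfrac12e_1$ while $r(x^{(n)})=1$. More generally, adding mass on coordinates that escape to infinity does not change the limit. Continuity therefore forces each output coordinate $\Phi(x)_i$ to be independent of $r(x)$, so your maps collapse to purely coordinatewise maps $x\mapsto(\psi_i(x_i))_i$. Such maps cannot work:
\begin{itemize}
\item For $B(\kappa,0,t)$ versus $B^+(\kappa)$, which was itself one of the open cases, $\psi_j$ must send $t\mapsto1$. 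Then a point $te_j+se_k$ with $0<s\le\min\{t,1-t\}$ is mapped outside the ball.
\item For $B^+(\kappa)$ versus $B(\kappa)$, each $\psi_i\colon[0,1]\to[-1,1]$ sends the endpoint $0$ to $\pm1$. So $\Phi(0)$ would have uncountably many nonzero coordinates.
\end{itemize}

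\emph{Step (1) is circular.} If you fill in the pairs correctly, the map $(p_i,q_i)\mapsto(p_i-q_i,\,2\min\{p_i,q_i\})$ identifies $B^+(\kappa\times2)\cong B^+(\kappa)$ with a space $B(\kappa\sqcup\kappa,a,b)$. That space has $\kappa$ symmetric coordinates plus $\kappa$ \emph{additional} coordinates of type $[0,1]$. So you would have to absorb $\kappa$ positive coordinates into $B(\kappa)$, which is the original problem. Collapsing the leftovers into the single number $\sum_i\min\{p_i,q_i\}$ is neither injective nor continuous. Working in fibres of the norm does not help, since the norm is itself discontinuous.

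\textbf{What is missing is the key construction: a \emph{norm-preserving} homeomorphism in the countable case.} One needs $F\colon B^+(\mathbb N)\to B(\mathbb N,-\alpha,\beta)$ with $\sum_n|F(x)_n|=\sum_nx_n$, under the divergence hypothesis $\sum_n\beta_n=\infty$ (after reflecting coordinates so that $\alpha_n\le\beta_n$). Coordinates must be mixed through a carried mass: set $c_0=x_1$ and $(y_n,c_n)=\Lambda_{\alpha_n,\beta_n}(c_{n-1},x_{n+1})$, where $\Lambda_{\alpha,\beta}$ is a planar $\ell_1$-preserving homeomorphism of the triangle $T$ onto $T_{\alpha,\beta}$. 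Three facts make this work:
\begin{itemize}
\item Each $y_n$ depends on only finitely many $x_k$, so continuity holds even though the norm is discontinuous.
\item The divergence of $\sum\beta_n$ forces $c_n\to0$, which gives exact norm preservation.
\item The map $d\mapsto A_{\alpha,\beta}(y,d)$ is $\tfrac34$-Lipschitz except on a set of coordinates of finite $\beta$-sum. This gives a unique backward solution for $(c_n)$, hence bijectivity.
\end{itemize}
One then partitions $\kappa$ into countable blocks, each with divergent $\beta$-sum, and applies $F$ blockwise. Norm preservation on each block makes the global constraint $\sum\le1$ automatic, so the product map is directly a homeomorphism $B^+(\kappa)\to B(\kappa,a,b)$. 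No Kalenda absorption and no inverse limits are needed.

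Your closing sentence gestures toward this blockwise gluing, but without the separable norm-preserving map it has no content. Your preliminary reduction of the countably many coordinates with $0\notin[a(i),b(i)]$ is fine; it is essentially the Avil\'es--Kalenda Proposition~1 that the paper invokes.
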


This answers in particular the cases singled out in Problem~3 of
\cite[p.~339]{AvilesKalenda2010}.  If \(t^*\) denotes the function on \(\kappa\)
which is equal to \(t\) except at one point, where it is \(0\), we get the half ball $B^{1/2}(\kappa)=B(\kappa,-1^*,1)$ made of the elements of the ball with a fixed nonnegative coordinate.

\begin{corollary}\label{cor:problem3}
Let \(\kappa\) be an uncountable cardinal.  For every \(0<t<1\), the
compact spaces
\[
B(\kappa),\quad
B^+(\kappa),\quad
B^{1/2}(\kappa),\quad
B(\kappa,-t,t),\quad
B(\kappa,0,t)
\]
are mutually homeomorphic.
\end{corollary}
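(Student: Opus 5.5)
The plan is to derive the corollary directly from Theorem~\ref{thm:classification}. Each of the listed spaces is of the form $B(\kappa,a,b)$ for a nontrivial pair $(a,b)$, so each is homeomorphic to $B(\kappa)$. Since homeomorphism is an equivalence relation, the five spaces are then mutually homeomorphic. The only work is to exhibit, for each space, the functions $a,b$ and to check the three hypotheses of Theorem~\ref{thm:classification}: that $a,b$ take values in $[-1,1]$, that $a(i)<b(i)$ for every $i$, and that $\mu(a,b)=\sum_i\bigl(a(i)^++b(i)^-\bigr)<1$.

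The choices, case by case, are as follows.
\begin{itemize}
\item \emph{$B(\kappa)$.} Take $a\equiv-1$ and $b\equiv1$. The constraints $-1\le x_i\le1$ are already implied by $\sum|x_i|\le1$, so $B(\kappa,-1,1)=B(\kappa)$. Here $\mu=0$.
\item \emph{$B^+(\kappa)$.} Take $a\equiv0$ and $b\equiv1$. Again $\mu=0$.
\item \emph{$B^{1/2}(\kappa)$.} Take $a=-1^*$, which equals $-1$ except at one point $i_0$ where it is $0$, and $b\equiv1$. Then $a(i)<b(i)$ everywhere. All the terms $a(i)^+$ and $b(i)^-$ vanish, so $\mu=0$.
\item \emph{$B(\kappa,-t,t)$ with $0<t<1$.} Take the constants $a\equiv-t$ and $b\equiv t$. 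Then $-t<t$, both lie in $[-1,1]$, and $a^+=0$, $b^-=0$, so $\mu=0$.
\item \emph{$B(\kappa,0,t)$.} Take $a\equiv0$ and $b\equiv t$. Then $0<t$ and $\mu=0$.
\end{itemize}

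In every case $\mu(a,b)=0<1$, so Theorem~\ref{thm:classification} applies and gives a homeomorphism onto $B(\kappa)$. Composing one such homeomorphism with the inverse of another gives a homeomorphism between any two of the listed spaces.

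There is no real obstacle, since all the depth lies in Theorem~\ref{thm:classification}. The only points to get right are small ones. One must read $B^{1/2}(\kappa)$ correctly as the cut with a single coordinate forced to be nonnegative. One must also note that $t<1$ is used only to keep the bands inside $[-1,1]$, while $t>0$ is what gives $a<b$. The uncountability of $\kappa$ is not needed for this deduction. It only explains why the statement is of interest: for countable $\kappa$ the homeomorphisms follow already from Keller--Klee, because each space is an infinite-dimensional metrizable compact convex set.
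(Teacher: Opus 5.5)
Your proposal is correct and follows the paper's approach: the paper treats this corollary as an immediate instance of Theorem~\ref{thm:classification}, after noting that each listed space is some $B(\kappa,a,b)$ with $a<b$ and $\mu(a,b)=0<1$. You check exactly these conditions case by case, including the reading $B^{1/2}(\kappa)=B(\kappa,-1^*,1)$.
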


By the aforementioned result of Kalenda, the product of these spaces with the unit interval will remain homeomorphic to them. In fact, a little more is true.

\begin{corollary}\label{cor:interval-product}
	For every infinite cardinal \(\kappa\), $B(\kappa)$ is homeomorphic to $B(\kappa)\times[0,1]^\N$.
\end{corollary}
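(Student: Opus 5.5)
The plan is to obtain Corollary~\ref{cor:interval-product} from Theorem~\ref{thm:classification} by choosing bands $a,b$ for which $B(\kappa,a,b)$ visibly splits as a product of the Hilbert cube with a copy of $B(\kappa)$. Fix a countably infinite set $N=\{i_0,i_1,\dots\}\subset\kappa$ such that $\kappa\setminus N$ has cardinality $\kappa$; when $\kappa=\N$, take $N$ and its complement both infinite. Define
\[
a(i)=0,\quad b(i)=2^{-n-2}\ \text{ for } i=i_n\in N,\qquad a(i)=-1,\quad b(i)=1\ \text{ for } i\notin N.
\]
Then $a(i)<b(i)$ for all $i$ and $\mu(a,b)=0<1$. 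Theorem~\ref{thm:classification} therefore gives $B(\kappa,a,b)\cong B(\kappa)$, and it remains to show that $B(\kappa,a,b)\cong[0,1]^\N\times B(\kappa\setminus N)$. Since $|\kappa\setminus N|=\kappa$, we have $B(\kappa\setminus N)\cong B(\kappa)$, which finishes the argument.

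For $x\in B(\kappa,a,b)$ set $s(x)=\sum_n x_{i_n}$. Because $x_{i_n}\in[0,2^{-n-2}]$, the series converges uniformly, so $s$ is continuous in the product topology and $0\le s\le 1/2$. Define
\[
\Phi(x)=\Bigl(\bigl(2^{n+2}x_{i_n}\bigr)_{n},\ \bigl(x_i/(1-s(x))\bigr)_{i\notin N}\Bigr).
\]
The second component lies in $B(\kappa\setminus N)$, because the constraint $\sum_{i\notin N}|x_i|\le 1-s(x)$ is exactly the constraint defining $B(\kappa,a,b)$ off $N$. Each coordinate of $\Phi$ is continuous, since it is a coordinate divided by the continuous, nonvanishing function $1-s$.

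The inverse sends $(y,z)$ to the point $x$ given by $x_{i_n}=2^{-n-2}y_n$ and $x_i=(1-\sigma(y))z_i$ for $i\notin N$, where $\sigma(y)=\sum_n 2^{-n-2}y_n$. This map is continuous for the same reason. It lands in $B(\kappa,a,b)$ because
\[
\sum_{i\in\kappa}|x_i|\le\sigma(y)+(1-\sigma(y))=1.
\]
One checks directly that the two maps are mutually inverse, using $s(x)=\sigma(y)$. Both spaces are compact Hausdorff, so continuity of $\Phi$ together with bijectivity would already suffice.

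I expect no real obstacle here; everything rests on Theorem~\ref{thm:classification}. The only points to watch are the continuity of $s$ in the product (weak$^*$) topology, which is where the summable bound $b(i_n)=2^{-n-2}$ is used, and the bound $s\le 1/2$, which keeps the rescaling factor $1/(1-s)$ bounded.
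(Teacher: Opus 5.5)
Your proof is correct and is essentially the paper's argument: the paper uses the same rescaling $(x,u)\mapsto((1-s(u))x,u)$ to identify $B(\kappa)\times\prod_n[0,r_n]$ with a band space $B(\Gamma,a,b)$, and then invokes Theorem~\ref{thm:classification}. The only cosmetic differences are that the paper adjoins a disjoint copy of $\N$ to $\kappa$ (taking $\Gamma=\kappa\sqcup\N$) instead of carving $N$ out of $\kappa$, and it leaves the identification $\prod_n[0,r_n]\cong[0,1]^\N$ implicit, whereas you normalize the cube coordinates explicitly.
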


\begin{proof}
	Choose numbers \(r_n>0\) with
	\[
	\rho=\sum_{n=1}^\infty r_n<1.
	\]
	Let \(Q_r=\prod_{n=1}^\infty[0,r_n]\).  Since \(\sum r_n<\infty\), the
	function
	\[
	s(u)=\sum_{n=1}^\infty u_n
	\qquad(u\in Q_r)
	\]
	is continuous for the product topology.  Define
	\[
	\Theta:B(\kappa)\times Q_r\longrightarrow
	\left\{(z,u):z\in[-1,1]^\kappa,
	u\in Q_r,
	\sum_{i\in\kappa}|z_i|+\sum_{n=1}^\infty u_n\le1
	\right\}
	\]
	by
	\[
	\Theta(x,u)=((1-s(u))x,u).
	\]
	Since \(s(u)\le\rho<1\), this is a homeomorphism.  Its inverse is
	\[
	(z,u)\longmapsto\left(\frac{z}{1-s(u)},u\right).
	\]
	The target is a space of the form \(B(\Gamma,a,b)\), where
	$\Gamma$ is the disjoint union of $\kappa$ and $\N$, the coordinates in \(\kappa\) have interval
	\([-1,1]\), and the \(n\)-th new coordinate has interval \([0,r_n]\).
	Because \(|\Gamma|=\kappa\),
	Theorem~\ref{thm:classification} says that the target is homeomorphic to
	\(B(\kappa)\). We conclude that $B(\kappa)$ is homeomorphic to $B(\kappa)\times Q_r$.
\end{proof}

The first step of the proof of Theorem~\ref{thm:classification}, carried out in Section~\ref{sectionplane}, is to consider a certain piecewise linear map in two dimensions that will be later used as a basic brick of the construction. The second step, done in Section~\ref{sectkey}, is to produce a homeomorphism in the countable case with the extra property that norm is preserved. This result, which requires an extra hypothesis of divergent bounds, has an independent interest:

\begin{theorem}\label{thm:separable}
	Given $a,b:\mathbb{N}\longrightarrow [-1,1]$ with $a\leq 0\leq b$ and \[\sum_n \max\{-a(n),b(n)\}=\infty,\] there exists a homeomorphism $F:B^+(\mathbb{N})\longrightarrow B(\mathbb{N},a,b)$ such that $$\sum_n |F(x)_n|= \sum_n x_n$$ for all $x\in B^+(\mathbb{N})$.
\end{theorem}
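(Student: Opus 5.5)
The plan is to build $F$ as an infinite composition of elementary, norm-preserving, piecewise linear homeomorphisms. Each one acts on only finitely many coordinates, and every coordinate is touched by only finitely many of them. Then the limit map and its inverse are coordinatewise eventually constant, hence continuous for the product topology.

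\emph{The two-dimensional brick.} The starting point is the planar map of Section~\ref{sectionplane}. For each $r\ge0$, the segment $\{(u,v)\ge0:u+v=r\}$, parametrized by $v\in[0,r]$, is mapped piecewise linearly and positively homogeneously onto the broken line $\{(u,v):|u|+v=r,\ v\ge0\}$, going from $(r,0)$ through $(0,r)$ to $(-r,0)$. Together with reflections and clipping versions, this gives norm-preserving homeomorphisms between planar pieces $\{|u|+|v|\le s,\ u\in I,\ v\in J\}$ for different intervals $I,J$.

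The clipping versions are what matter here. The mass that no longer fits in a shrunken interval for the first coordinate is moved into the second coordinate, while $|u|+|v|$ stays fixed. Applied to a pair of coordinates of an element of $B(\mathbb{N},a',b')$, such a brick changes only those two coordinates and preserves $\sum_n|x_n|$. It is therefore a norm-preserving homeomorphism $B(\mathbb{N},a',b')\to B(\mathbb{N},a'',b'')$, where $a'',b''$ differ from $a',b'$ only at the two coordinates involved.

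\emph{The induction.} Using $\sum_n\max\{-a(n),b(n)\}=\infty$, I would partition $\mathbb{N}$ into consecutive finite blocks $N_1<N_2<\cdots$ such that each block has total capacity $\sum_{n\in N_k}\max\{-a(n),b(n)\}\ge 1$. At stage $k$ I start from an intermediate space in which the coordinates of $N_1\cup\dots\cup N_{k-1}$ already carry their final bounds $[a(n),b(n)]$, and the remaining coordinates still have their original bounds $[0,1]$.

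A finite composition $G_k$ of bricks, acting only inside $N_k\cup N_{k+1}$, then does two things:
\begin{itemize}
\item it turns each coordinate $n\in N_k$ into one with the prescribed range $[a(n),b(n)]$, first using the sign-doubling brick to open the negative side, then clipping at $a(n),b(n)$;
\item it pushes the excess mass into the coordinates of $N_{k+1}$, which are still of type $[0,1]$.
\end{itemize}
The capacity condition on $N_{k+1}$ is what guarantees there is room for the excess. This is exactly where divergence is needed: a point such as $e_1$ must end up with its full norm distributed within the bounds.

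\emph{The limit.} Put $F=\lim_k G_k\circ\cdots\circ G_1$. Coordinates in $N_k$ are modified only by $G_{k-1}$ and $G_k$, so each coordinate of the partial compositions is eventually constant, and $F$ is continuous. Injectivity and surjectivity follow from building the inverse composition in the same way, since each coordinate of the inverse is also determined after finitely many steps. Norm preservation follows from the fact that each $G_k$ preserves $\sum|x_n|$, together with lower semicontinuity in both directions, applied to $F$ and to $F^{-1}$.

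\emph{Main obstacle.} I expect the main difficulty to be the design of $G_k$ on the finite block. It must be a single homeomorphism, uniformly defined for all inputs, including points whose mass in $N_k$ is as large as $1$. Moreover, the mass it pushes into $N_{k+1}$ must be handled at the next stage without ever exceeding the available capacity. The first thing I would try is to route the excess sequentially: a chain of bricks $(n_1,n_2),(n_2,n_3),\dots$ within $N_k$ and then into $N_{k+1}$, each brick passing along only the overflow. I would then verify, using only the homogeneity of the brick, that this chain is a bijection between the two finite-dimensional polytope-like sets.

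A second point to check carefully is that the total-norm identity really passes to the limit. This requires that no mass escapes to infinity. Here I would use the fact that mass only moves from block $N_k$ to block $N_{k+1}$ when the norm within $N_k$ exceeds the capacity of $N_k$. So the pushed mass shrinks geometrically, or at least its tail sums to zero.
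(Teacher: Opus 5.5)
\textbf{There is a genuine gap.} Your forward construction is essentially the paper's: a chain of norm-preserving planar bricks, $(y_n,c_n)=\Lambda_n(c_{n-1},x_{n+1})$, in which each output coordinate depends on finitely many inputs, so continuity of $F$ is free. The main gap is bijectivity. Your claim that each coordinate of the inverse is determined after finitely many steps is false. Inverting the chain means solving $c_{n-1}=A_n(y_n,c_n)$ backwards from infinity, so $x_1=c_0$ depends on the whole tail of $y$. For example, with the brick of Section~\ref{sectionplane} and $\alpha_n=\beta_n=1$ one has $A(y,d)=(y+|y|+d)/2$. The preimage of $te_k$ then has $x_1=t\,2^{1-k}$, so every $y_k$ matters. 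Moreover, $G_1^{-1}\circ\cdots\circ G_k^{-1}$ is not even defined on $B(\N,a,b)$, because $G_k^{-1}$ requires nonnegative coordinates on $N_{k+1}$.

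Bijectivity also does not follow formally from the bricks being homeomorphisms. Take $\alpha_n=0$ and $\sum\beta_n<1$. The same recursion is continuous, yet it sends every $Ce_1$ with $C\ge\sum\beta_n$ to $(\beta_n)_n$. What you are missing is the paper's Claim~2. The truncated backward solutions with $c^{(N)}_N=0$ converge to the unique solution of $c_{n-1}=A_n(y_n,c_n)$. The reason is that $d\mapsto A_k(y_k,d)$ is $\lambda_k$-Lipschitz with $\lambda_k\le 1$, and $\lambda_k\le 3/4$ for the infinitely many $k$ with $y_k\le\beta_k/2$. There are infinitely many such $k$ because the remaining indices have $\beta$-sum at most $2$; this is a second, essential use of $\sum\beta_n=\infty$. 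Once existence and uniqueness are proved, $F^{-1}$ is continuous by compactness, so you never need the inverse to be finitely determined.

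Your justification of the norm identity is also wrong. Mass cannot pass from $N_k$ to $N_{k+1}$ only on overflow. If it did, then, since the norm in $N_k$ never exceeds $1\le$ capacity, $G_k$ restricted to inputs vanishing on $N_{k+1}$ would be a norm-preserving homeomorphism from $\{w\in[0,1]^{N_k}:\sum w_n\le1\}$ onto $\{v\in\prod_{n\in N_k}[a(n),b(n)]:\sum|v_n|\le1\}$. It would carry the level set $\{w\ge0:\sum w_n=r\}$, which is a simplex, onto $\{v:\sum|v_n|=r\}$. For small $r$ the latter is a sphere whenever the coordinates of $N_k$ are two-sided, as in $B(\N)$, which is impossible. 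So every stage must pass mass forward even for inputs of tiny norm; for the paper's brick the carry is nonzero unless $c\in\{0,r\}$.

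You therefore have to prove that the carry tends to $0$. The paper's Claim~1 does this: if $c_n>\eps$ infinitely often, then $c_n>\eps/2$ for all large $n$. Using $\alpha_n\le\beta_n$, this forces $|y_n|\ge\frac13\min\{\beta_n,\eps/2\}$ for large $n$, contradicting $\sum|y_n|\le1$. Alternatively, your lower-semicontinuity argument does work, but only after Claim~2. Applied to the truncated backward preimages, whose norms are $\sum_{k\le N}|y_k|$, it gives $\sum_n|F^{-1}(y)_n|\le\sum_n|y_n|$.

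A smaller point: the literal clipping map $(u,v)\mapsto(\min\{u,\beta\},v+(u-\beta)^+)$ is not injective. The brick must instead rescale each level segment, as $\Lambda_{\alpha,\beta}$ does.
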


\begin{corollary}
	There exists a homeomorphism $F:B^+(\mathbb{N})\longrightarrow B(\mathbb{N})$ such that $\sum_n |F(x)_n|= \sum_n x_n$ for all $x\in B^+(\mathbb{N})$.
\end{corollary}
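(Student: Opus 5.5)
This corollary follows by specializing Theorem~\ref{thm:separable} to the constant functions $a\equiv -1$ and $b\equiv 1$ on $\mathbb{N}$. I will check the hypotheses and then identify the target space.

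The sign hypothesis $a\le 0\le b$ holds trivially. For the divergence hypothesis, $\max\{-a(n),b(n)\}=1$ for every $n$, so
\[
\sum_n \max\{-a(n),b(n)\}=\sum_n 1=\infty .
\]
The theorem therefore applies. It produces a homeomorphism $F\colon B^+(\mathbb{N})\to B(\mathbb{N},-1,1)$ with $\sum_n |F(x)_n|=\sum_n x_n$ for every $x\in B^+(\mathbb{N})$.

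It remains to identify $B(\mathbb{N},-1,1)$ with $B(\mathbb{N})$. By definition, $B(\mathbb{N},-1,1)$ consists of those $x\in\R^{\mathbb{N}}$ with $\sum_n|x_n|\le 1$ and $-1\le x_n\le 1$ for all $n$. The box constraint is already implied by $\sum_n|x_n|\le1$. Both spaces carry the product topology, so the two sets and their topologies coincide. Hence $F$ is the required norm-preserving homeomorphism onto $B(\mathbb{N})$.

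There is no real obstacle here: all the difficulty sits inside Theorem~\ref{thm:separable}, and the corollary only requires checking its hypotheses.
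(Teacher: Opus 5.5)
Your proposal is correct and is exactly the paper's intended argument. The paper states this corollary without proof as the special case $a\equiv-1$, $b\equiv1$ of Theorem~\ref{thm:separable}: there the divergence hypothesis holds trivially, and $B(\mathbb{N},-1,1)=B(\mathbb{N})$ as topological spaces.
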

We recall the result of Dijkstra and van Mill \cite{DijkstraVanMill}, cf. \cite[Theorem 4]{AvilesKalenda2010}, that there exists a homeomorphism $G:B(\mathbb{N})\longrightarrow [-1,1]^\mathbb{N}$ for which $\sup_n |G(x)_n| = \sum_n |x_n|$ and $G(B^+(\mathbb{N}))= [0,1]^{\mathbb{N}}$.

\begin{corollary}
There exists a homeomorphism $G:[0,1]^\mathbb{N}\longrightarrow [-1,1]^\mathbb{N}$ such that $\sup_n|G(x)_n| = \sup_n x_n$ for all $x\in [0,1]^\mathbb{N}$.  
\end{corollary}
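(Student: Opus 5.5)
The plan is to compose three maps. The first is the Dijkstra--van Mill homeomorphism $G_0:B(\mathbb{N})\to[-1,1]^\mathbb{N}$ recalled just above. It satisfies $\sup_n|G_0(x)_n|=\sum_n|x_n|$ and $G_0(B^+(\mathbb{N}))=[0,1]^\mathbb{N}$. The second is the norm-preserving homeomorphism $F:B^+(\mathbb{N})\to B(\mathbb{N})$ of the preceding corollary, with $\sum_n|F(x)_n|=\sum_n x_n$. The candidate is
\[
G=G_0\circ F\circ \bigl(G_0|_{B^+(\mathbb{N})}\bigr)^{-1}:[0,1]^\mathbb{N}\longrightarrow[-1,1]^\mathbb{N}.
\]

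The steps, in order, are as follows.

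\emph{Step 1.} Restrict $G_0$ to $B^+(\mathbb{N})$. Since $G_0$ is a homeomorphism and $G_0(B^+(\mathbb{N}))=[0,1]^\mathbb{N}$, the restriction is a homeomorphism $H:B^+(\mathbb{N})\to[0,1]^\mathbb{N}$. Its inverse $H^{-1}$ is continuous, either by compactness or directly as the restriction of $G_0^{-1}$. For $y=H(x)$ with $x\in B^+(\mathbb{N})$ we have $\sup_n y_n=\sup_n|y_n|=\sum_n|x_n|=\sum_n x_n$, using that the coordinates of $y$ are nonnegative.

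\emph{Step 2.} $G$ is a composition of homeomorphisms $[0,1]^\mathbb{N}\to B^+(\mathbb{N})\to B(\mathbb{N})\to[-1,1]^\mathbb{N}$, so it is a homeomorphism onto $[-1,1]^\mathbb{N}$.

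\emph{Step 3.} Check the norm identity. Take $y\in[0,1]^\mathbb{N}$ and put $x=H^{-1}(y)$. Then
\[
\sup_n|G(y)_n|=\sum_n|F(x)_n|=\sum_n x_n=\sup_n y_n .
\]
The first equality is the Dijkstra--van Mill property applied to $F(x)$, the second is the norm preservation of $F$, and the third is Step 1.

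There is no real obstacle, since all the substance lies in the previous corollary, which rests on Theorem~\ref{thm:separable}. The one point needing care is the identity $\sup_n y_n=\sup_n|y_n|$ in Step 1. It depends on $H$ mapping onto $[0,1]^\mathbb{N}$, so that all coordinates of $y$ are nonnegative. This is exactly what the clause $G_0(B^+(\mathbb{N}))=[0,1]^\mathbb{N}$ in the recalled result guarantees.
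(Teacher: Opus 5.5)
Your proposal is correct and matches the argument the paper intends but leaves implicit: conjugate the norm-preserving homeomorphism $F:B^+(\mathbb{N})\to B(\mathbb{N})$ by the Dijkstra--van Mill homeomorphism. The clause $G_0(B^+(\mathbb{N}))=[0,1]^\mathbb{N}$ is what turns $\sum_n x_n$ into $\sup_n y_n$ on the domain side, exactly as you note.
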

 
Finally, in Section~\ref{sectglue}  we use Theorem~\ref{thm:separable} to glue countable blocks of coordinates together to pass to the uncountable case, finishing the proof of Theorem~\ref{thm:classification}.

%
%

\section{The planar map}\label{sectionplane}

Fix numbers $
0\le\alpha\le\beta\le1$ with $\beta>0$, and
\[
T=\{(c,x)\in[0,1]^2:c+x\le1\},
\]
\[
T_{\alpha,\beta}=\{(y,d)\in[-\alpha,\beta]\times[0,1]: |y|+d\le1\}.
\]
We are going to define a bijection $\Lambda_{\alpha,\beta}:T\longrightarrow T_{\alpha,\beta}$ that preserves the $\ell_1$-norm. For a fixed $r\in [0,1]$, $\Lambda_{\alpha,\beta}(c,x)=(y,d)$ is going to be the natural bijection between the segment $c+x=r$ inside $T$ and the union of two segments $|y|+d = r$ inside $T_{\alpha,\beta}$.

	\begin{center}
	\begin{tikzpicture}[x=3.0cm,y=2.55cm,>=Latex]
	\pgfmathsetmacro{\al}{0.45}
	\pgfmathsetmacro{\be}{0.85}
	\pgfmathsetmacro{\rr}{0.70}
	
	\begin{scope}
	\fill[gray!10] (0,0)--(1,0)--(0,1)--cycle;
	\draw[thick] (0,0)--(1,0)--(0,1)--cycle;
	\draw[thin,dashed] (0,\rr)--(\rr,0);
	\node at (0.5,0.8) {$T$};
	\node[below] at (0.5,-0.08) {$c$};
	\node[left] at (-0.08,0.5) {$x$};
	\node[below left] at (0,0) {\scriptsize $0$};
	\node[below] at (1,0) {\scriptsize $1$};
	\node[left] at (0,1) {\scriptsize $1$};
	\node[above right] at (0.01,0.13) {\scriptsize $c+x=r$};
	\end{scope}
	
	\draw[->,thick] (0.8,0.45)--(1.6,0.45);
	\node at (1.2,0.58) {$\Lambda_{\alpha,\beta}$};
	
	\begin{scope}[shift={(2.25,0)}]
	\fill[gray!10] (-\al,0)--(-\al,{1-\al})--(0,1)--(\be,{1-\be})--(\be,0)--cycle;
	\draw[thick] (-\al,0)--(-\al,{1-\al})--(0,1)--(\be,{1-\be})--(\be,0)--cycle;
	\draw[thin,dashed] (-\al,{\rr-\al})--(0,\rr)--(\rr,0);
	\node at (0.5,0.8) {$T_{\alpha,\beta}$};
	\node[below] at (0.3,-0.08) {$y$};
	\node[left] at (-\al-0.05,0.3) {$d$};
	\node[below] at (-\al,0) {\scriptsize $-\alpha$};
	\node[below] at (0,0) {\scriptsize $0$};
	\node[below] at (\be,0) {\scriptsize $\beta$};
	\node[left] at (0,1) {\scriptsize $1$};
	\node[above right] at (-0.05,\rr-0.6) {\scriptsize $|y|+d=r$};
	\end{scope}
	\end{tikzpicture}
\end{center}

For each $r\in [0,1]$, in the segment of $c+x=r$ inside $T$ the variables $x$ and $c$ range in the interval $[0,r]$. But the range $y$ for which $|y|+d=r$ is inside $T_{\alpha,\beta}$ is $[-r,r]$ if $0\leq r\leq \alpha$, $[-\alpha,r]$ if $\alpha\leq r\leq \beta$ and $[-\alpha,\beta]$ if $\beta\leq r\leq 1$. Thus, the formula for $\Lambda_{\alpha,\beta}(c,x) = (y,d)$ is
$$y(c,x) = \begin{cases} -r + 2c & \text{ if } 0\leq r <\alpha\\  -\alpha + \frac{\alpha+r}{r}c & \text{ if } \alpha\leq r \leq \beta \\-\alpha + \frac{\alpha+\beta}{r}c & \text{ if } \beta < r \leq 1\end{cases}, \ \ d(c,x) = r -|y(c,x)|, \ \ \text{ where } r=x+c,$$
with the exceptional case $\Lambda_{\alpha,\beta}(0,0)=(0,0)$ when $r=0$. 
A more synthetic formula for the first coordinate is $y(c,x)=-\ell(r)+\frac{\ell(r)+p(r)}{r}\,c$ where $\ell(r)=\min\{\alpha,r\}$, $p(r)=\min\{\beta,r\}$.

The inverse of $\Lambda_{\alpha,\beta}$ will be denoted by
\[
\Omega_{\alpha,\beta}=(A_{\alpha,\beta},B_{\alpha,\beta}):
T_{\alpha,\beta}\longrightarrow T.
\]
If \((y,d)\in T_{\alpha,\beta}\) and \(r=|y|+d\), we have
\(\Omega_{\alpha,\beta}(0,0)=(0,0)\), and for \(r>0\), the formulas derived from isolating the variables are:
\[
A_{\alpha,\beta}(y,d)=
r\,\frac{y+\ell(r)}{\ell(r)+p(r)},
\qquad
B_{\alpha,\beta}(y,d)=r-A_{\alpha,\beta}(y,d).
\]

\begin{proposition}\label{prop:planar-homeomorphism}
The map \(\Lambda_{\alpha,\beta}\) is a homeomorphism from \(T\) onto
\(T_{\alpha,\beta}\), with inverse \(\Omega_{\alpha,\beta}\).  Moreover it
preserves the $\ell_1$-norm:
\[
|y|+d=c+x
\qquad\text{whenever }(y,d)=\Lambda_{\alpha,\beta}(c,x).
\]
\end{proposition}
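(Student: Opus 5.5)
The plan is to work fiber by fiber along the level sets of the $\ell_1$-norm, and then use compactness to upgrade a continuous bijection into a homeomorphism. For each $r\in(0,1]$ put $T^r=\{(c,x)\in T: c+x=r\}$ and $T^r_{\alpha,\beta}=\{(y,d)\in T_{\alpha,\beta}: |y|+d=r\}$.

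\textbf{Step 1 (each fiber is mapped bijectively).} I will check that $\Lambda_{\alpha,\beta}$ maps $T^r$ bijectively onto $T^r_{\alpha,\beta}$, for every $r\in(0,1]$. Write $\ell=\ell(r)$ and $p=p(r)$. As $c$ runs over $[0,r]$, the value $y=-\ell+\frac{\ell+p}{r}c$ runs affinely and increasingly over $[-\ell,p]$. This interval is exactly the range of $y$ on $T^r_{\alpha,\beta}$ computed before the statement: the condition $y\in[-\alpha,\beta]$ together with $|y|\le r$ gives $y\in[-\min\{\alpha,r\},\min\{\beta,r\}]$.

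Once $y$ is fixed, the point of $T^r_{\alpha,\beta}$ is determined by $d=r-|y|$, and indeed $d\ge0$, $d\le 1$ and $|y|\le1$. So the fiber map is a bijection. It preserves the norm because $|y|+d=r=c+x$ by construction. Solving the affine relation for $c$ gives $c=r\frac{y+\ell}{\ell+p}$, which is $A_{\alpha,\beta}$, and then $x=r-c$, which is $B_{\alpha,\beta}$. Hence $\Omega_{\alpha,\beta}$ is the inverse on every fiber.

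The fiber $r=0$ is the single point sent to $(0,0)$. Since the fibers partition both $T$ and $T_{\alpha,\beta}$, this shows that $\Lambda_{\alpha,\beta}$ is a norm-preserving bijection with inverse $\Omega_{\alpha,\beta}$. Note that $\ell+p>0$ whenever $r>0$, because $\beta>0$.

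\textbf{Step 2 (continuity of $\Lambda_{\alpha,\beta}$).} Since $T$ is compact and $T_{\alpha,\beta}$ is Hausdorff, it is enough to prove that $\Lambda_{\alpha,\beta}$ is continuous; the inverse is then automatically continuous. I would use the synthetic formula.

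On $T\setminus\{(0,0)\}$ the functions $r$, $\ell(r)$ and $p(r)$ are continuous and $r>0$. Therefore $y$ is continuous there, and so is $d=r-|y|$. For continuity at the origin, note that $c\le r$, so
$$|y|\le \ell(r)+(\ell(r)+p(r))\le 3r\to0,$$
and also $|d|\le r$. Thus $\Lambda_{\alpha,\beta}(c,x)\to(0,0)$ as $(c,x)\to(0,0)$.

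\textbf{Main obstacle.} The only delicate point is the degenerate corner at $r=0$. There the slope $(\ell+p)/r$ can blow up: for $r<\alpha$ it equals $2$, but for larger $r$ it behaves like $(\alpha+\beta)/r$. This is handled by the bound $c/r\le 1$ above. The case $\alpha=0$, where $\ell\equiv0$, needs no special treatment, because the formula remains continuous.

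Everything else is routine piecewise-affine bookkeeping. As a check against the three-case formula, the cases agree at the breakpoints: at $r=\alpha$ they give $-\alpha+2c$, and at $r=\beta$ they give $-\alpha+\frac{\alpha+\beta}{\beta}c$.
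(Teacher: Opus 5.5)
Your proof is correct and follows essentially the paper's argument: the bijection is built fiber by fiber along the level sets of the $\ell_1$-norm, continuity away from the origin comes from the synthetic formula, and continuity at the origin comes from a norm bound (the paper just uses $|y|+d=c+x$, which is sharper than your $|y|\le 3r$). The only difference is that you get continuity of $\Omega_{\alpha,\beta}$ from the compact-to-Hausdorff argument rather than from its explicit formulas, which is equally valid; as a side remark, the slope $(\ell+p)/r$ never actually blows up, since $\ell,p\le r$ bounds it by $2$, so the only issue at $r=0$ is that the formula is undefined there.
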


\begin{proof}
The preservation of the norm and the fact that they are inverses of each other is obvious by construction. The continuity at every point different from $(0,0)$ is clear from the synthetic formulas of the coordinates which are valid where $r\neq 0$. The continuity at $(0,0)$ follows from the preservation of the norm. 
\end{proof}

We will need the following estimate:

\begin{proposition}\label{prop:lipschitz-planar}
For fixed \(y\in[-\alpha,\beta]\), the function
\[
d\longmapsto A_{\alpha,\beta}(y,d),
\qquad 0\le d\le1-|y|,
\]
is \(\lambda_{\alpha,\beta}(y)\)-Lipschitz, where
\[
\lambda_{\alpha,\beta}(y)=
\max\left\{\frac12,\frac{y+\alpha}{\alpha+\beta}\right\}.
\]
In particular, \(0\le\lambda_{\alpha,\beta}(y)\le1\), and if
\(y\le\beta/2\), then
\[
\lambda_{\alpha,\beta}(y)\le\frac34.
\]
\end{proposition}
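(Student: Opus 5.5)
The plan is to fix \(y\in[-\alpha,\beta]\) and reparametrize by \(r=|y|+d\), which ranges over \([|y|,1]\) and is a translate of \(d\) with unit slope. So it suffices to show that \(g(r)=A_{\alpha,\beta}(y,r-|y|)\) is \(\lambda_{\alpha,\beta}(y)\)-Lipschitz in \(r\) on \([|y|,1]\). By the synthetic formula,
\[
g(r)=r\,\frac{y+\ell(r)}{\ell(r)+p(r)}\qquad (r>0),
\]
and \(g(0)=0\), the latter only arising when \(y=0\). The function \(g\) is continuous, since \(\Omega_{\alpha,\beta}\) is continuous by Proposition~\ref{prop:planar-homeomorphism}. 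It is also piecewise \(C^1\) on the three intervals determined by the breakpoints \(\alpha\) and \(\beta\). A continuous, piecewise \(C^1\) function is Lipschitz with constant equal to the supremum of \(|g'|\) over the pieces, so I will bound the derivative on each piece.

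First, on \(r<\alpha\) we have \(\ell=p=r\), so \(g(r)=(y+r)/2\) and \(g'=1/2\). This piece is nonexistent when \(\alpha=0\), and the degenerate point \(r=0\) causes no problem because \(g\) is linear there. Second, on \(\alpha\le r\le\beta\) we have \(g(r)=r(y+\alpha)/(\alpha+r)\), hence
\[
g'(r)=\frac{(y+\alpha)\alpha}{(\alpha+r)^2}=\frac{y+\alpha}{\alpha+r}\cdot\frac{\alpha}{\alpha+r}.
\]
Here \(y+\alpha\ge0\), and \(y\le|y|\le r\) gives \((y+\alpha)/(\alpha+r)\le1\). Since \(r\ge\alpha\), also \(\alpha/(\alpha+r)\le1/2\), so \(0\le g'\le 1/2\). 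Third, on \(r>\beta\) the function is linear, \(g(r)=r(y+\alpha)/(\alpha+\beta)\), with slope \((y+\alpha)/(\alpha+\beta)\in[0,1]\). Taking the maximum over the three pieces gives the constant \(\lambda_{\alpha,\beta}(y)\).

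The remaining claims are elementary. Since \(-\alpha\le y\le\beta\), we have \(0\le (y+\alpha)/(\alpha+\beta)\le1\), and therefore \(\lambda_{\alpha,\beta}(y)\in[1/2,1]\subseteq[0,1]\). If \(y\le\beta/2\), then
\[
\frac{y+\alpha}{\alpha+\beta}\le\frac{\alpha+\beta/2}{\alpha+\beta}\le\frac34,
\]
because the last inequality is equivalent to \(\alpha\le\beta\). Hence \(\lambda_{\alpha,\beta}(y)\le3/4\).

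The only step needing care is the middle piece, where the bound \(g'\le1/2\) relies on using both \(y\le r\) and \(r\ge\alpha\). A secondary point is justifying the gluing of the pieces into a global Lipschitz bound, for which continuity of \(g\) at the breakpoints is enough.
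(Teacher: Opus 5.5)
Your proof is correct and follows essentially the same route as the paper. Like the paper, you reparametrize by \(r=|y|+d\) and compute the slope on the three pieces \(r\le\alpha\), \(\alpha\le r\le\beta\), \(r\ge\beta\), using \(y\le r\) and \(r\ge\alpha\) to get the middle bound \(\le 1/2\). You then glue the pieces by continuity at the breakpoints and obtain the \(3/4\) bound from \(\alpha\le\beta\).
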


\begin{proof}
Put \(r=|y|+d\).  Since \(r\) has slope \(1\) as a function of \(d\), it
is enough to look at \(A_{\alpha,\beta}\) as a function of \(r\).

If \(0\le r\le\alpha\), then \(\ell(r)=p(r)=r\), and
\[
A_{\alpha,\beta}(y,d)=\frac{r+y}{2},
\]
so the slope is \(1/2\).  If \(\alpha\le r\le\beta\), then
\(\ell(r)=\alpha\), \(p(r)=r\), and
\[
A_{\alpha,\beta}(y,d)=r\frac{y+\alpha}{r+\alpha}.
\]
The slope is
\[
\frac{\alpha(y+\alpha)}{(r+\alpha)^2}\le \frac{\alpha}{r+\alpha}\le\frac12.
\]
 Finally,
if \(r\ge\beta\), then \(\ell(r)=\alpha\), \(p(r)=\beta\), and
\[
A_{\alpha,\beta}(y,d)=r\frac{y+\alpha}{\alpha+\beta},
\]
so the slope is \((y+\alpha)/(\alpha+\beta)\).  The formulas are
continuous at the transition points, hence the stated Lipschitz bound
follows by splitting an interval at those points.

If \(y\le\beta/2\), then
\[
\frac{y+\alpha}{\alpha+\beta}
\le \frac{\beta/2+\alpha}{\alpha+\beta}
\le\frac34,
\]
because \(\alpha\le\beta\).
\end{proof}

\section{The countable construction}\label{sectkey}

We call the bound functions $a=-\alpha$ and $b=\beta$, so that we manipulate nonnegative quantities $\alpha,\beta$. By switching the sign of coordinates where $\alpha(n)>\beta(n)$, we can suppose that we have the extra hypothesis that $\alpha(n)\leq \beta(n)$ for all $n$. Thus our objective is to find a homeomorphism $F:B^+(\mathbb{N})\longrightarrow B(\mathbb{N},-\alpha,\beta)$ that preserves the norm, assuming that $\alpha,\beta:\mathbb{N}\longrightarrow [0,1]$ are such that $\sum_n \beta_n=\infty$.  Removing trivial coordinates, we can also assume that $\beta(n)>0$ for all $n$. 
For each \(n\), write
\[
\Lambda_n=\Lambda_{\alpha_n,\beta_n},
\qquad
\Omega_n=(A_n,B_n)=\Omega_{\alpha_n,\beta_n}.
\]

Let \(x=(x_1,x_2,\ldots)\in B^+(\mathbb{N})\).  Start with \(c_0=x_1\), and
recursively define
\[
(y_n,c_n)=\Lambda_n(c_{n-1},x_{n+1})
\qquad(n\ge1).
\]
Set
\[
F(x)=(y_1,y_2,\ldots).
\]
Each \(y_n\) depends only on \(x_1,\ldots,x_{n+1}\) through a continuous formula, so \(F\) is
continuous for the product topologies.  By Proposition~\ref{prop:planar-homeomorphism},
\[
|y_n|+c_n=c_{n-1}+x_{n+1}
\qquad(n\ge1).
\]
Consequently,
\[
\sum_{k=1}^n |y_k|+c_n=\sum_{k=1}^{n+1}x_k
\qquad(n\ge1). \tag{1}
\]
In particular \(\sum_{k=1}^n|y_k|\le1\) for every \(n\).

\medskip
\noindent\textbf{Claim 1.}
For every \(x\in B^+(\mathbb{N})\), the sequence $(c_n)$ defined above satisfies \(\lim c_n = 0\).

\smallskip
\noindent\emph{Proof of Claim 1.}
Suppose not.  Then there is \(\eps>0\) such that \(c_n>\eps\) for
infinitely many \(n\).  Choose \(N\) so large that
\[
\sum_{j>N}x_j<\frac{\eps}{4}.
\]
Since \(c_n\le c_{n-1}+x_{n+1}\), once \(c_q\le\eps/2\) for some
\(q\ge N\), every later \(c_m\) is less than \(\eps\). This would contradict our assumption,  hence in fact
\[
c_n>\frac{\eps}{2}
\qquad(n\ge N).
\]
For \(n>N\), writing \(r_n=c_{n-1}+x_{n+1}\) we have
\[
\frac{c_{n-1}}{r_n}\ge \frac{c_{n-1}}{c_{n-1}+\varepsilon/4} \ge \frac{c_{n-1}}{c_{n-1}+c_{n-1}/2} \ge \frac23.
\]
Writing \(\ell_n=\min\{\alpha_n,r_n\}\) and
\(p_n=\min\{\beta_n,r_n\}\), the formula for \(\Lambda_n\) in Section~\ref{sectionplane} gives
\[
y_n=-\ell_n+(\ell_n+p_n)\frac{c_{n-1}}{r_n}
\ge -\ell_n+\frac23(\ell_n+p_n)
=\frac{2p_n-\ell_n}{3}
\ge\frac{p_n}{3},
\] where the last inequality comes from our extra hypothesis at the beginning of this section that $\alpha\leq \beta$.
Since \(r_n>\eps/2\),
\[
p_n\ge\min\left\{\beta_n,\frac{\eps}{2}\right\}.
\]
Thus
\[
|y_n|\ge\frac13\min\left\{\beta_n,\frac{\eps}{2}\right\}
\qquad(n>N).
\]
But \(\sum_n\beta_n=\infty\), and therefore
\(\sum_n\min\{\beta_n,\eps/2\}=\infty\), contradicting
\(\sum_n|y_n|\le1\).  This proves the claim. \claimqed

Letting \(n\to\infty\) in (1), Claim~1 gives
\[
\sum_{n=1}^\infty |F(x)_n|=
\sum_{n=1}^\infty x_n.
\]
So \(F(x)\in B(\mathbb{N},-\alpha,\beta)\), and \(F\) preserves the \(\ell_1\)-norm.

It remains to prove that \(F\) is bijective. Since $F$ is a continuous mapping between compact Hausdorff spaces, this will show that $F$ is a homeomorphism.  We fix
\(y=(y_1,y_2,\ldots)\in B(\mathbb{N},-\alpha,\beta)\). In the first place, we show that the auxiliary sequence $(c_n)$ is uniquely determined by $y$.

\medskip
\noindent\textbf{Claim 2.}
For every \(y\in B(\mathbb{N},-\alpha,\beta)\) there is a unique sequence
\((c_n)_{n\ge0}\) in \([0,1]\) satisfying \[
c_{n-1}=A_n(y_n,c_n)
\qquad(n\ge1). \tag{2}
\] 

\smallskip
\noindent\emph{Proof of Claim 2.}
For \(N\ge1\), define a finite backward sequence by
\[
c_N^{(N)}=0,
\qquad
c_{k-1}^{(N)}=A_k(y_k,c_k^{(N)})
\quad(k=N,N-1,\ldots,1).
\]
This is legitimate.  Indeed, the preservation of the norm by \(\Omega_k\) gives,
by backward induction,
\[
c_m^{(N)}\le \sum_{k=m+1}^{N}|y_k|
\qquad(0\le m<N).
\]
Hence \(|y_m|+c_m^{(N)}\le \sum_{k=m}^{N}|y_k|\le1\), so the pair
\((y_m,c_m^{(N)})\) lies in the domain of \(\Omega_m\).

Put
\[
\lambda_k=
\lambda_{\alpha_k,\beta_k}(y_k)=
\max\left\{\frac12,\frac{y_k+\alpha_k}{\alpha_k+\beta_k}\right\}.
\]
Remember from Proposition~\ref{prop:lipschitz-planar} that \(0\le\lambda_k\le1\), and we have
\(\lambda_k\le3/4\) whenever \(y_k\le\beta_k/2\).  The exceptional set
\[
E=\{k\in\N:y_k>\beta_k/2\}
\]
has finite \(\beta\)-sum, because
\[
\sum_{k\in E}\beta_k\le2\sum_{k\in E}y_k\le2\sum_{k=1}^\infty|y_k|\le2.
\]
Since \(\sum_k\beta_k=\infty\), the complement \(\N\setminus E\) is
infinite.  Therefore, for each fixed \(m\),
\[
\prod_{k=m+1}^{N}\lambda_k\longrightarrow0
\qquad(N\to\infty).
\]
If \(N'>N>m\), repeated use of Proposition~\ref{prop:lipschitz-planar}
gives
\[
|c_m^{(N')}-c_m^{(N)}|
\le
\left(\prod_{k=m+1}^{N}\lambda_k\right)
|c_N^{(N')}-c_N^{(N)}|
\le
\prod_{k=m+1}^{N}\lambda_k.
\]
Thus \((c_m^{(N)})_{N>m}\) is Cauchy.  Define
\[
c_m=\lim_{N\to\infty}c_m^{(N)}.
\]
Passing to the limit in the finite recursion gives (2).

The same estimate proves uniqueness.  If \((c_n)\) and \((c_n')\) both
solve (2), then
\[
|c_m-c_m'|\le
\left(\prod_{k=m+1}^{N}\lambda_k\right)|c_N-c_N'|
\le
\prod_{k=m+1}^{N}\lambda_k,
\]
and the right-hand side tends to \(0\).

 \claimqed

Given \(y\in B(\mathbb{N},-\alpha,\beta)\), let \((c_n)\) be the sequence from
Claim~2 and define
\[
x_1=c_0,
\qquad
x_{n+1}=B_n(y_n,c_n)
\quad(n\ge1).
\]
Then
\[
(c_{n-1},x_{n+1})=\Omega_n(y_n,c_n),
\]
so applying the forward recursion to this \(x\) gives back \(y\).  Also
\[
c_m\le\sum_{k=m+1}^\infty |y_k|,
\]
which follows by passing to the limit in the corresponding estimate for
\(c_m^{(N)}\).  Hence \(c_m\to0\).  Telescoping the norm-preservation identities $c_{n-1}+x_{n+1} = |y_n|+c_n$ gives
\[
\sum_{k=1}^{N+1}x_k=
\sum_{k=1}^{N}|y_k|+c_N,
\]
and therefore \(\sum_kx_k=\sum_k|y_k|\le1\).  Thus \(x\in B^+(\mathbb{N})\). The uniqueness in Claim~2 shows that no other \(x\) is mapped to \(y\). 
This finishes the proof of Theorem~\ref{thm:separable}.

\section{The uncountable construction}\label{sectglue}

We shall use the following elementary partition lemma.

\begin{lemma}\label{lem:partition}
Let \(\kappa\) be an uncountable cardinal, and let \((\beta_i)_{i\in\kappa}\)
be a family of positive real numbers.  Then \(\kappa\) can be partitioned
into countably infinite sets
\[
\kappa=\bigcup_{\xi<\kappa}S_\xi
\]
such that
\[
\sum_{i\in S_\xi}\beta_i=\infty
\qquad(\xi<\kappa).
\]
\end{lemma}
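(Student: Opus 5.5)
The idea is to use a pigeonhole argument on the sizes of the $\beta_i$ to find a large set of coordinates all bounded below by a fixed positive number. We then build the partition so that each block contains infinitely many of these coordinates; the other coordinates can be distributed freely.

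Since every $\beta_i>0$, we have $\kappa=\bigcup_{m\ge1}\{i:\beta_i>1/m\}$. This is a countable union, and $\kappa$ is uncountable, so some set $D=\{i:\beta_i>1/m\}$ has cardinality $\kappa$. In fact $|D|=\kappa$ follows because $\kappa$ has uncountable cofinality $\ge\omega_1>\omega$; more simply, cardinal arithmetic gives that a countable union of sets of size $<\kappa$ has size $\max(\omega,\sup)$. This supremum could equal $\kappa$ only when $\mathrm{cf}\,\kappa=\omega$, and that is the one point needing care, discussed below. Any countably infinite subset of $D$ has infinite $\beta$-sum, since its terms are all $>1/m$.

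Assuming $|D|=\kappa$, we construct the partition as follows. Write $|\kappa\times\omega|=\kappa$ and fix a bijection $D\to\kappa\times\omega$. This splits $D$ into $\kappa$ disjoint countably infinite sets $D_\xi$, $\xi<\kappa$. The rest $R=\kappa\setminus D$ has cardinality at most $\kappa$. Enumerate it injectively as $\{r_\xi:\xi\in J\}$ for some $J\subseteq\kappa$, and set $S_\xi=D_\xi\cup\{r_\xi\}$ if $\xi\in J$, and $S_\xi=D_\xi$ otherwise. Each $S_\xi$ is countably infinite, the $S_\xi$ partition $\kappa$, and $\sum_{i\in S_\xi}\beta_i\ge\sum_{i\in D_\xi}\beta_i=\infty$.

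The main obstacle is the cofinality-$\omega$ case, e.g.\ $\kappa=\aleph_\omega$, where it may happen that no single level set $\{\beta_i>1/m\}$ has full size $\kappa$. We fix this by avoiding the need for one large set. Let $D_m=\{i:1/m<\beta_i\le1/(m-1)\}$ (with $D_1=\{i:\beta_i>1\}$). Call $m$ \emph{large} if $D_m$ is infinite. If only finitely many $m$ are large, then a finite union of the $D_m$ carries all but countably many coordinates, so some $D_m$ has size $\kappa$ and we are back in the previous case. Otherwise, each infinite $D_m$ splits into $|D_m|$ countably infinite pieces, each with infinite sum. Merging the at most countably many indices from finite $D_m$'s into pieces, and using $\sum_m|D_m|=\kappa$ over large $m$, we obtain $\kappa$ many blocks. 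Hence in all cases we get the required partition, with the total count of blocks being $\sum_m|D_m|=\kappa$.
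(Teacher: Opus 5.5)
Your proof is correct, but it takes a different route from the paper.

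The paper builds the blocks by transfinite recursion. At stage $\xi<\kappa$ fewer than $\kappa$ indices have been used, so the unused part is uncountable. It therefore meets some level set $\{i:\beta_i\ge 1/m\}$ in an infinite set, and a countably infinite $T_\xi$ is chosen there. The leftover is then cut into $\kappa$ countable pieces and added to the $T_\xi$. Each step only needs an \emph{infinite} level set, not one of cardinality $\kappa$, so the cofinality issue never arises.

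You instead decompose in one shot. You split each infinite band $D_m$ into countably infinite pieces using a bijection $D_m\cong D_m\times\omega$. You then absorb the countable union of the finite bands into the pieces.

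This band argument works verbatim for every uncountable $\kappa$. The finite bands contain only countably many indices, so the infinite bands cover a set of size $\kappa$. Your pieces partition that set into countably infinite sets, so there are exactly $\kappa$ of them. Consequently the first case, the cofinality discussion, and the split into finitely versus infinitely many large $m$ are all redundant.

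Your first paragraph also needs cleaning up. It asserts that some $\{i:\beta_i>1/m\}$ has size $\kappa$ ``because $\kappa$ has uncountable cofinality''. That is false for, e.g., $\kappa=\aleph_\omega$. You correct this afterwards, but the proof would read better starting directly from the bands.

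What each approach buys: yours is an explicit, non-recursive construction in which every block lies in a single band, so its terms are bounded below by one constant. The paper's recursion is shorter and relies only on the trivial fact that an uncountable set has an infinite level set.
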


\begin{proof}
Construct pairwise disjoint countably infinite sets \(T_\xi\), \(\xi<\kappa\),
by transfinite recursion.  At stage \(\xi\), fewer than \(\kappa\) indices
have been used, so the remaining set has cardinality \(\kappa\).  Since it
is the union of the sets \(\{i:\beta_i\ge1/m\}\), \(m=1,2,\ldots\), one of
these sets is infinite inside the remaining set.  Choose a countably
infinite subset and call it \(T_\xi\).  Its \(\beta\)-sum is divergent. Once this recursion is finished, the leftover set \(L = \kappa\setminus \bigcup_{\xi<\kappa} T_\xi\) has cardinal at most \(\kappa\), so write it as a
disjoint union \(L=\bigcup_{\xi<\kappa}L_\xi\), where each \(L_\xi\) is
countable, possibly empty.  Then
\[
S_\xi=T_\xi\cup L_\xi
\]
is the required partition.
\end{proof}

We now prove Theorem~\ref{thm:classification} for uncountable $\kappa$. For $\kappa$ countable, it is a particular case of the aforementioned Keller-Klee Theorem, and for $\kappa$ finite it is elementary topology. By \cite[p.~339, Proposition~1]{AvilesKalenda2010},
we may suppose that
\[
-1\le a(i)\le0<b(i)\le1,
\qquad
|a(i)|\le b(i)
\quad(i\in\kappa).
\]
Put
\[
\alpha_i=-a(i),
\qquad
\beta_i=b(i).
\]
Then \(0\le\alpha_i\le\beta_i\le1\) and \(\beta_i>0\).

By Lemma~\ref{lem:partition}, choose a partition
\[
\kappa=\bigcup_{\xi<\kappa}S_\xi
\]
into countably infinite blocks such that
\[
\sum_{i\in S_\xi}\beta_i=\infty
\qquad(\xi<\kappa).
\]
Theorem~\ref{thm:separable}, applied to each $S_\xi$ instead of $\mathbb{N}$, gives a
homeomorphism
\[
F_\xi:
\left\{u\in[0,1]^{S_\xi}:\sum_{i\in S_\xi}u_i\le1\right\}
\longrightarrow
\left\{v\in\prod_{i\in S_\xi}[-\alpha_i,\beta_i]:
\sum_{i\in S_\xi}|v_i|\le1\right\}
\]
such that
\[
\sum_{i\in S_\xi}|F_\xi(u)_i|=
\sum_{i\in S_\xi}u_i.
\]

Define
\[
\Phi:B^+(\kappa)\longrightarrow B(\kappa,a,b)
\]
block by block:
\[
\Phi(x)|_{S_\xi}=F_\xi(x|_{S_\xi}).
\]
The norm identity on each block gives
\[
\sum_{i\in\kappa}|\Phi(x)_i|=
\sum_{i\in\kappa}x_i\le1,
\]
so \(\Phi(x)\in B(\kappa,a,b)\).  The inverse is obtained by applying
\(F_\xi^{-1}\) on each block.  Both \(\Phi\) and its inverse are continuous
because continuity into a product is coordinatewise.  Therefore
\(
B(\kappa,a,b)\) is homeomorphic to  \(B^+(\kappa)\). So all spaces $B(\kappa,a,b)$ as in the statement of Theorem~\ref{thm:classification} are indeed homeomorphic and the proof is finished.

\end{document}